\begin{document}

\title[Base And Cover Partitions]{On Base Partitions And Cover Partitions Of Skew Characters}
\author[C. Gutschwager]{Christian Gutschwager}
\address{Institut für Algebra, Zahlentheorie und Diskrete Mathematik, Leibniz Universität Hannover,  Welfengarten 1, D-30167 Hannover}
\email{gutschwager (at) math (dot) uni-hannover (dot) de}

\newtheorem{Le}{Lemma}[section]
\newtheorem{Ko}[Le]{Lemma}
\newtheorem{Sa}[Le]{Theorem}
\newtheorem{pro}[Le]{Proposition}
\newtheorem{Bem}[Le]{Remark}
\newtheorem{Def}[Le]{Definition}
\newtheorem{Bsp}[Le]{Example}
\renewcommand{\l}{\lambda}
\newcommand{\bl}{\bar\lambda}
\newcommand{\bn}{\bar\nu}
\newcommand{\mA}{\mathcal{A}}
\newcommand{\mB}{\mathcal{B}}
\newcommand{\mC}{\mathcal{C}}
\newcommand{\mD}{\mathcal{D}}
\renewcommand{\a}{\alpha}
\renewcommand{\b}{\beta}
\renewcommand{\c}{\gamma}
\newcommand{\C}{\Gamma}
\renewcommand{\k}{\kappa}
\newcommand{\U}{\Upsilon}
\newcommand{\h}{\hfil}
\newcommand{\X}{X}
\renewcommand{\pm}[1]{\begin{pmatrix}#1\end{pmatrix}}
\newcommand{\pinw}{{\pi_{nw}}}
\newcommand{\abs}[1]{\lvert #1 \rvert}
\newcommand{\tm}{\tilde\mu}
\newcommand{\tn}{\tilde\nu}
\newcommand{\lm}{\l/\mu}
\newcommand{\m}{\mu}
\newcommand{\n}{\nu}
\newcommand{\ab}{\a/\b}

\subjclass[2000]{05E05,05E10,14M15,20C30}
\keywords{Base Partitions, skew characters, symmetric group, skew Schur functions, Schubert Calculus}

\begin{abstract}
In this paper we give an easy combinatorial description for the base partition $\mB$ of a skew character $[\mA]$, which is the intersection of all partitions $\a$ whose corresponding character $[\a]$ appears in $[\mA]$.

This we use to construct the cover partition $\mC$ for the ordinary outer product as well as for the Schubert product of two characters and for some skew characters, here the cover partition is the union of all partitions whose corresponding character appears in the product or in the skew character.

This gives us also the Durfee size for arbitrary Schubert products.
\end{abstract}
\maketitle

\section{Introduction}
In this paper we give upper and lower bounds for partitions $\a$ such that the corresponding character $[\a]$ can appear in a given outer product or Schubert product of two characters or in a skew character.

We give in Remark~\ref{Bem:kurz} an easy combinatorial description for the base partition $\mB$ of a skew character $[\mA]$, which is the intersection of all partitions $\a$ whose corresponding character $[\a]$ appears in $[\mA]$.

Using~\cite[Theorem 4.2]{Gut} and the base partition we construct the cover partition $\mC$ for the ordinary outer product as well as for the Schubert product of two characters and for some skew characters. Here the cover partition is the union of all partitions whose corresponding character appears in the product or in the skew character.

\section{Notation and Littlewood Richardson Symmetries}
We mostly follow the standard notation in \cite{Sag}. A partition $\l=(\l_1,\l_2,\ldots,\l_l)$ is a weakly decreasing sequence of non-negative integers, $\l_i$ called the parts of $\l$. For the length we write $l(\l)=l$ and the sum $\left|\l\right|=\sum_i \l_i$. With a partition $\l$ we associate a diagram, which we also denote by $\l$, containing $\l_i$ left-justified boxes in the $i$-th row and we use matrix style coordinates to refer to the boxes. Sometimes we will use the short notation $\l=(\l_1^{l_1},\l_2^{l_2},\ldots)$ which means $\l$ has $l_1$ times the part $\l_1$, $l_2$ times the part $\l_2$ and so forth.

The conjugate $\l^c$ of $\l$ is the diagram which has $\l_i$ boxes in the $i$-th column.

For $\mu \subseteq \l$ we define the skew diagram $\lm$ as the difference of the diagrams $\l$ and $\mu$ defined as the difference of the set of the boxes. Rotation of $\lm$ by $180^\circ$ yields a skew diagram $(\lm)^\circ$ which is well defined up to translation. A skew tableau $T$ is a skew diagram in which the boxes are replaced by positive integers.  We refer with $T(i,j)$ to the entry in box $(i,j)$. A semistandard tableau of shape $\lm$ is a filling of $\lm$ with positive integers such that the following inequalities hold for all $(i,j)$ for which they are defined: $T(i,j)<T(i+1,j)$ and $T(i,j)\leq T(i,j+1)$. The content of a semistandard tableau $T$ is $\nu=(\nu_1,\ldots)$ if the number of occurrences of the entry $i$ in $T$ is $\nu_i$. The reverse row word of a tableau $T$ is the sequence obtained by reading the entries of $T$ from right to left and top to bottom starting at the first row. Such a sequence is said to be a lattice word if for all $i,n \geq1$ the number of occurrences of $i$ among the first $n$ terms is at least the number of occurrences of $i+1$ among these terms. The Littlewood Richardson (LR-) coefficient $c(\lambda;\mu,\nu)$ equals the number of semistandard tableaux of shape $\lm$ with content $\nu$ such that the reverse row word is a lattice word. We will call those tableaux LR-tableaux. The LR-coefficients play an important role in different contexts (see \cite{Sag}).

The irreducible characters $[\l]$ of the symmetric group $S_n$ are naturally labeled by partitions $\l\vdash n$. The skew character $[\lm]$ to a skew diagram $\lm$ is defined by the LR-coefficients:
\[ [\lm]=\sum_\nu c(\lambda;\mu,\nu) [\nu] \]

If $c(\lambda;\mu,\nu)\neq0$ we say that $[\nu]$ appears in $[\lm]$ and write $[\nu]\in[\lm]$. If $c(\lambda;\mu,\nu)=0$ we write instead $[\nu]\notin[\lm]$.

There are many known symmetries of the LR-coefficients (see \cite{Sag}).

We have that $c(\lambda;\mu,\nu)=c(\lambda;\nu,\mu)$. The translation symmetry gives $[\lm]=[\ab]$ if the skew diagrams of $\lm$ and $\ab$ are the same up to translation while rotation symmetry gives $[(\lm)^\circ]=[\lm]$. Furthermore the conjugation symmetry $c(\l^c;\m^c,\n^c)=c(\l;\m,\n)$ is also well known.

Furthermore rearranging the parts of skew diagram $\lm$ gives a partition $\a$ with $[\a]$ appearing in $[\lm]$. This follows for example easily by writing into the conjugated skew diagram $(\lm)^c$ into each column the entries $1$ to the length of this column. This means $[\a^c]$ appears in $[(\lm)^c]$ and conjugating again gives that $[\a]$ appears in $[\lm]$. We will use this fact later on.

We say that a skew diagram $\mD$ decays into the disconnected skew diagrams $\mA$ and $\mB$ if no box of $\mA$ (viewed as boxes in $\mD$) is in the same row or column as a box of $\mB$. We write $\mD=\mA\otimes\mB$ if $\mD$ decays into $\mA$ and $\mB$. A skew diagram is connected if it does not decay.

A skew character whose skew diagram $\mD$ decays into disconnected (skew) diagrams $\mA,\mB$ is equivalent to the product of the  characters of the disconnected diagrams induced to a larger symmetric group. We have  \[[\mD]=([\mA]\times[\mB])\uparrow_{S_m\times S_n}^{S_{n+m}}=:[\mA]\otimes[\mB]\] with $\abs{\mA}=m,\abs{\mB}=n$.  If $\mD=\lm$ and $\mA,\mB$ are proper partitions $\a,\b$ we have:
\[[\lm]= \sum_\nu c(\l;\mu,\nu)[\nu]=\sum_\nu c(\nu;\a,\b)[\nu] =[\a]\otimes[\b]\]

In the cohomology ring $H^*(Gr(l,\mathbb{C}^n),\mathbb{Z})$ of the Grassmannian $Gr(l,\mathbb{C}^n)$ of $l$-di\-men\-sio\-nal subspaces of $\mathbb{C}^n$ the product of two Schubert classes $\sigma_\a, \sigma_\b$ is given by:

\[\sigma_\a\cdot\sigma_\b=\sum_{\nu\subseteq((n-l)^l)}c(\nu;\a,\b)\sigma_\nu\]

In~\cite[Section4]{Gut} we established a close connection between the Schubert product and skew characters. To use this relation later on we define the Schubert product for characters in the obvious way as a restriction of the ordinary product:

\[[\a]\star_{(k^l)}[\b]:=\sum_{\nu\subseteq(k^l)} c(\nu;\a,\b)[\nu] \]

The Durfee size $d(\l)$ of a partition $\l$ is $d$ if $(d^d)\subseteq\l$ is the largest square contained in $\l$. The Durfee size of a character $\chi$ is the biggest Durfee size of all partitions whose corresponding character appears in the decomposition of $\chi$:
\[d(\chi)=\max(d(\nu)\,|\,[\nu]\in\chi)\]

\section{The Base Partition}
In this section we give in Remark~\ref{Bem:kurz} an easy combinatorial description for the base partition $\mB$ of a skew character $[\mA]$, which is the intersection of all partitions $\a$ whose corresponding character $[\a]$ appears in $[\mA]$. Since we have that skew characters $[\mA]$ whose skew diagram decays into two partitions $\mA=\mu\otimes\nu$ satisfies $[\mA]=[\mu]\otimes[\nu]$ this gives us also a combinatorial description for $\mB([\mu]\otimes[\nu])$

\begin{Def}
 We say that a partition $\a$ is contained in $\mA$ if there is a subdiagram of $\mA$ which is $\a$ or $\a^\circ$.
\end{Def}

For example the skew diagram $\mA=(11,6,5^3,4)/(3^2)$
\[\young(:::\h\h\h\h\h\h\h\h,:::\h\h\h,\h\h\h\h\h,\h\h\h\h\h,\h\h\h\h\h,\h\h\h\h)\]

contains the partitions $\a^1=(8,3,2^3,1), \a^2=(5^3,4), \a^3=(5^3,2^2), \a^4=(4^4,1^2)$:
{ \[\a^1:\young(:::XXXXXXXX,:::XXX,\h\h\h\X\X,\h\h\h\X\X,\h\h\h\X\X,\h\h\h\X)\qquad         \a^2:\young(:::\h\h\h\h\h\h\h\h,:::\h\h\h,XXXXX,XXXXX,XXXXX,XXXX)\] \[\a^3:\young(:::\X\X\h\h\h\h\h\h,:::\X\X\h,XXXXX,XXXXX,XXXXX,\h\h\h\h) \qquad
  \a^4:\young(:::X\h\h\h\h\h\h\h,:::X\h\h,XXXX\h,XXXX\h,XXXX\h,XXXX)\]}

All other partitions contained in $\mA$ are subdiagrams of some $\a^i$.

\begin{Def}
For a skew diagram $\mA$ define the union partition $\U(\mA)$ as the union of all partitions $\a$ which are contained in $\mA$. So:
\[\U(\mA)_i=\max(\a_i\;\lvert \;\a \textnormal{ is contained in } \mA )\]
Since an arbitrary partition $\a$ is the union of the rectangles $((\a_i)^i)$ (for example we have that $\a=(5,3,3,2,1)$ is the union of the five rectangles $(5^1),(3^2),(3^3),(2^4)$ and $(1^5)$), it is sufficient for $\U$ to restrict the union to all rectangles $\a$ which are contained in $\mA$ but are not contained in a larger rectangle $\b$ contained in $\mA$.
\end{Def}

So in the above example we have $\U(\mA)=(8,5,5,4,2,1)$ and there are $6$ of those maximal rectangles.

\begin{Def}
 For a character $\chi$ we define the base partition $\mB(\chi)$ as the intersection of all $\a$ with $[\a]\in\chi$. So:
\[ \mB(\chi)_i=\min(\a_i \;\lvert \; [\a]\in\chi) \]
\end{Def}

\begin{Sa}\label{Sa:main}
 Let $\mA$ be a skew diagram.

 Then: $\U(\mA)=\mB([\mA])$
\end{Sa}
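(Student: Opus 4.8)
The plan is to prove the two inclusions $\U(\mA)\subseteq\mB([\mA])$ and $\mB([\mA])\subseteq\U(\mA)$ separately, in both cases working one row at a time since both partitions are defined by their parts $\U(\mA)_i$ and $\mB([\mA])_i$.

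\medskip

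\emph{First inclusion: $\U(\mA)_i\le\mB([\mA])_i$ for every $i$.} Fix $i$ and let $((k^j))$ be a maximal rectangle contained in $\mA$ realizing $\U(\mA)_i$, so $j\ge i$ and $k=\U(\mA)_i$ (or the $180^\circ$ rotation of such a rectangle sits inside $\mA$; by the rotation symmetry $[(\mA)^\circ]=[\mA]$ we may assume it is an honest subdiagram). The point is that a rectangle $(k^j)$ sitting inside $\mA$ forces every $[\a]\in[\mA]$ to satisfy $\a_i\ge k$. The cleanest way I would argue this: the complement $\mA\setminus(k^j)$ is a (possibly disconnected) skew diagram, and any LR-filling of $\mA$ restricts to an LR-filling on the sub-skew-diagram obtained by deleting the rows through which the rectangle passes — more precisely, I would use that in any LR-tableau of shape $\mA$ the entries in the rectangle $(k^j)$ read by reverse row word already form, together with lattice-word constraints, a configuration forcing the content to have at least $k$ rows of length $\ge$ something; cleaner still is to invoke the ``rearranging the parts'' fact from the excerpt together with containment of skew shapes. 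Concretely: if $(k^j)$ (up to rotation) is a subdiagram of $\mA$, then for \emph{any} $[\nu]\in[\mA]$ we must have $\nu\supseteq$ some partition forced by that rectangle; the simplest rigorous route is that $c(\l;\m,\nu)\neq0$ implies $\nu\supseteq$ every partition $\beta$ with $[\beta]\in[\mA']$ for $\mA'$ a connected ``horizontal/vertical strip removal''... I would instead fall back on the known monotonicity: if a skew diagram $\mD'$ is obtained from $\mD$ by deleting boxes not in $(k^j)$, then every $[\nu]\in[\mD]$ dominates-in-each-part some $[\nu']\in[\mD']$, and $[\mD'$ restricted to the rectangle$]=[(k^j)]=[(k)^{\,j}\ \text{column}]$ wait — the rectangle skew character of a straight rectangle $(k^j)$ is just $[(k^j)]$ itself, whose unique constituent has $i$-th part $k$ for $i\le j$. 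Hence $\mB([\mA])_i\ge k=\U(\mA)_i$.

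\medskip

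\emph{Second inclusion: $\mB([\mA])_i\le\U(\mA)_i$ for every $i$.} Here I must exhibit, for each $i$, some $[\a]\in[\mA]$ with $\a_i=\U(\mA)_i$ (or $\le$). Write $m=\U(\mA)_i$ and pick the maximal rectangle $R=(m^j)$, $j\ge i$, contained in $\mA$ (again up to rotation, harmless by rotation symmetry). Maximality of $R$ means it cannot be enlarged in either direction inside $\mA$: in particular every box of $\mA$ lying in the $j$ rows spanned by $R$ either lies in $R$ or is ``blocked'' — and crucially the rows strictly below $R$, in the columns of $R$, are empty in $\mA$. I would then construct an LR-tableau of $\mA$ in which the content partition $\nu$ has $\nu_i\le m$. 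The natural construction: use the column-filling described in the excerpt (write $1,2,\dots$ down each column of $(\mA)^c$) adapted so that within the columns meeting $R$ the labels are packed to be as small as possible while the rest of $\mA$ is filled to push content into rows $1,\dots,i-1$ and rows $>j$. Because the $m$ columns of $R$ have length exactly $j$ (they cannot be longer by maximality), each such column contributes entries $1,\dots,j$, so these columns alone contribute exactly the rectangle $(m^j)$ to the content; the remaining boxes of $\mA$, lying entirely in rows $<$ those of $R$ or in columns outside $R$, can be arranged so as not to increase row $i$ of the content beyond $m$. One then checks this filling is a valid LR-tableau (reverse row word a lattice word), giving $[\nu]\in[\mA]$ with $\nu_i\le m$, hence $\mB([\mA])_i\le m=\U(\mA)_i$.

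\medskip

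The main obstacle I anticipate is the second inclusion: the explicit construction of an LR-tableau whose content has a prescribed small $i$-th part, and the verification that it is genuinely an LR-tableau. The delicate point is that one must simultaneously control \emph{all} the columns meeting the maximal rectangle $R$ while interacting correctly with the boxes of $\mA$ outside $R$, and the lattice-word condition couples all rows together. I would handle this by the conjugate trick: pass to $(\mA)^c$, where ``small $i$-th part of the content'' becomes a statement about column lengths, fill $(\mA)^c$ greedily by columns, and transport back via conjugation symmetry $c(\l^c;\m^c,\n^c)=c(\l;\m,\n)$; the maximality of $R$ is exactly what guarantees the greedy column-filling does not overflow. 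The first inclusion should be comparatively routine once phrased via containment/monotonicity of skew shapes plus the trivial computation of the skew character of a genuine rectangle.
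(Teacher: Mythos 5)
Your second inclusion -- the substantive half of the theorem -- rests on structural claims that are simply false and on a construction that is never carried out. If $R=(m^j)$ is a maximal rectangle contained in $\mA$, maximality does \emph{not} imply that "the $m$ columns of $R$ have length exactly $j$" nor that "the rows strictly below $R$, in the columns of $R$, are empty in $\mA$": maximality only forbids all $m$ columns from extending simultaneously through a common extra row. Already for $\mA=(2,2,1)$ and $i=2$ the maximal rectangle $(2^2)$ sits in rows $1,2$ while column $1$ of $\mA$ has length $3$, so the column through $R$ is longer than $j$ and has a box below $R$. Consequently your assertion that the columns of $R$ "contribute exactly the rectangle $(m^j)$ to the content" breaks down, and what remains is exactly the hard point you yourself flag: exhibiting a filling of all of $\mA$ with $\nu_i\le m$ and verifying the lattice-word condition. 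Your fallback ("pass to $(\mA)^c$, fill greedily by columns, maximality guarantees no overflow") is a plan, not an argument; no filling is specified and nothing is verified, so the inequality $\mB([\mA])_i\le\U(\mA)_i$ is not established. The paper avoids this difficulty entirely by a different idea: induction on the maximal column length of $\mA$, using the bijection between LR-tableaux of $\mA$ having $1$s in the top box of every column and arbitrary LR-tableaux of the diagram $\mD$ obtained by deleting those top boxes. This deletion satisfies $\U(\mA)_{i+1}=\U(\mD)_i$, and a constituent $[\hat\b]\in[\mD]$ with $\hat\b_i=\U(\mD)_i$ lifts to $[\b]\in[\mA]$ with $\b=(j,\hat\b)$ ($j$ the number of columns of $\mA$), which is the mechanism your proposal is missing; the first row is handled by the "rearranging the parts" fact.

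The first inclusion is less damaging but also not a proof as written: it is a sequence of abandoned attempts ending in an appeal to a "known monotonicity" (every constituent of $[\mD]$ contains, part by part, some constituent of any sub-skew-diagram $\mD'$), which you neither prove nor reference; for the case actually needed (a rectangle $(m^l)$ contained in $\mA$ forces $(m^l)\subseteq\nu$ for every $[\nu]\in[\mA]$) the paper invokes the LR rule directly, and a clean derivation (e.g.\ via column-strictness plus the lattice condition, or via the row-overlap conditions of \cite{RSW}) should be supplied rather than the unproved general lemma. As it stands, neither direction of the equality $\U(\mA)=\mB([\mA])$ is completely proved in your proposal.
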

\begin{proof}
From the LR-rule follows that if a rectangle $(m^l)$ is contained in $\mA$ then $(m^l)$ is contained in every partition $\nu$ with $[\nu]\in[\mA]$. So we have $\U\subseteq \mB=\mB([\mA])$.

We will show by induction on the biggest length of a column contained in $\mA$ that the lower bound for the rows of $\mB$ is reached for some partitions $\a$ with $[\a]\in[\mA]$.

Let us assume that the biggest length of a column in $\mA$ is $1$. Then $\mA$ decomposes into disconnected rows and $\U$ is the biggest row contained in $\mA$.

We have a character $[\a]\in[\mA]$ such that $\a$ contains the parts of $[\mA]$ and so $\a_1=\U_1$ which gives $\U_1=\mB_1$.

On the other hand if we place $1$s into every box of $\mA$ we obtain a LR-tableau and so we have $[n]\in[\mA]$ with $n=\abs{\mA}$ which gives us $\mB_2=0$ and so $\U=\mB$.

Let us now assume that $\U=\mB$ holds for all skew diagrams which have columns of length not larger than $n-1$ and let $\mA$ be a skew diagram which has one or more columns of length $n$.

Rearranging the parts of $\mA$ gives again a partition $\a$ whose corresponding character satisfies $[\a]\in[\mA]$ and $\a_1=\U_1$ and so again $\U_1=\mB_1$.

We will now prove that $\U_i=\mB_i$ holds also for $i\geq2$.

We have a $1-1$-relation between the LR-tableaux $A$ of shape $\mA$ and $1$s in the top boxes of every column and arbitrary LR-tableaux $D$ of shape $\mD$, where $\mD$ is the skew diagram $\mA$ with the top boxes of every column removed, simply by removing all $1$s from $A$ and replacing the entry $i$ in $A$ with $i-1$.

If we have for example $\mA=(6^3,5,4^2)/(4^2,1^2)$ and some arbitrary LR-filling we get with the above construction:

\[\mA=\young(::::\h\h,::::\h\h,:\h\h\h\h\h,:\h\h\h\h,\h\h\h\h,\h\h\h\h) \qquad A=\young(::::11,::::22,:11133,:2224,1335,2446) \longleftrightarrow D=\young(::::11,::::22,:1113,:224,1335)\]

Removing the top boxes of each column from $\mA$ reduces each of the maximal rectangles $\a^i$ by one row and gives $\hat\a^i$ which is then one of the maximal rectangles in $\mD$. So we get $\U(\mA)_{i+1}=\U(\mD)_{i}$.

In the above example we have the following rectangles $\a^i$ in $\mA$ and $\hat\a^i$ in $\mD$:

\[\a^1:\young(::::XX,::::XX,:\h\h\h\X\X,:\h\h\h\h,\h\h\h\h,\h\h\h\h)\quad
\a^2:\young(::::\X\h,::::\X\h,:\h\h\h\X\h,:\h\h\h\X,\h\h\h\h,\h\h\h\h)\quad \a^3:\young(::::\h\h,::::\h\h,:XXXXX,:\h\h\h\h,\h\h\h\h,\h\h\h\h)\]
 \[\hat\a^1:\young(::::XX,::::\X\X,:\h\h\h\h,:\h\h\h\h,\h\h\h\h)\quad
\hat\a^2:\young(::::\X\h,::::\X\h,:\h\h\h\X,:\h\h\h,\h\h\h\h)\quad
\hat\a^3:\young(::::\h\h,::::\h\h,:\h\h\h\h,:\h\h\h,\h\h\h\h)\]
\vspace{0.5cm}
\[\a^4:\young(::::\h\h,::::\h\h,:\X\X\X\X\h,:\X\X\X\X,\h\h\h\h,\h\h\h\h)\quad
\a^5:\young(::::\h\h,::::\h\h,:\X\X\X\h\h,:\X\X\X\h,\h\X\X\X,\h\X\X\X)\quad
\a^6:\young(::::\h\h,::::\h\h,:\h\h\h\h\h,:\h\h\h\h,\X\X\X\X,\X\X\X\X)\]

\[\hat\a^4:\young(::::\h\h,::::\h\h,:\X\X\X\X,:\h\h\h,\h\h\h\h)\quad
\hat\a^5:\young(::::\h\h,::::\h\h,:\X\X\X\h,:\X\X\X,\h\X\X\X)\quad
\hat\a^6:\young(::::\h\h,::::\h\h,:\h\h\h\h,:\h\h\h,\X\X\X\X)\]

Since the biggest length of columns in $\mD$ is $n-1$ we have $\U(\mD)=\mB([\mD])$. For $i\geq1$ let $[\hat\b^i]\in[\mD]$  with $\hat\b^i_i=\mB([\mD])_i$. Since the characters $[\hat\b^i]\in[\mD]$ correspond to characters $[\b^i]\in[\mA]$, with $\b^i=(j,\hat \b^i)$ and $j$ equal to the number of columns in $\mA$, we have in $[\mA]$ characters $[\b^i]$ with:
\[\b^i_{i+1}=\hat\b^i_i=\U(\mD)_i=\U(\mA)_{i+1}\]

This gives $\U(\mA)=\mB([\mA])$. \end{proof}

The previous proof also gives us the following description for the base partition $\mB([\mA])$.

\begin{Bem}\label{Bem:kurz}
Let $\rho^i(\mA)$ be the skew diagram obtained from $\mA$ by removing in every column the top $i-1$ boxes.

Then we have that the $i$-th part $\mB_i$ of the base partition $\mB$ is the maximal part of $\rho^i(\mA)$, and so for $\mA=\lm$ we have $\mB_i=\max_{j} \abs{\l_{i+j-1}-\m_j}_+$ with $\abs{x}_+=\max(0,x)$.

Also $\rho^i(\mA)$ is the $i$-row overlap composition defined in \cite{RSW}. There it was also proved that equality of $[\mA^1]=[\mA^2]$ for skew diagrams $\mA^1,\mA^2$ requires that $\rho^i(\mA^1)$ and $\rho^i(\mA^2)$ must have the same parts in the same quantity for every $i$. This follows easily from the $1-1$ correspondence used also in the proof of Theorem~\ref{Sa:main}. In \cite{MN} these $\rho^i(\mA)$ are used to get necessary conditions for positivity of $[\mA^1]-[\mA^2]$.
\end{Bem}

\section{The Cover Partition}
In this section we use Remark~\ref{Bem:kurz} and \cite[Theorem 4.2]{Gut} to determine for the ordinary and the Schubert product of two characters and for some special skew characters the cover partition $\mC$, which is the partition which contains all partitions whose corresponding character appears in the product or the skew character.

\cite[Theorem 4.2]{Gut} states the following:

\begin{Sa}
\label{Sa:Mainskewschub}
Let $\mu,\l$ be partitions with $\mu\subseteq\l\subseteq (k^l)$ with some fixed integers $k,l$. Set $\l^{-1}=(k^l)/\l$.

Then: The coefficient of $[\a]$ in $[\l/\mu]$ equals the coefficient of $[\a^{-1}]=[(k^l)/\a]$ in $[\mu]\star_{(k^l)}[\l^{-1}]$
\end{Sa}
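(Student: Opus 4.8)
The plan is to strip both sides down to bare Littlewood--Richardson coefficients and then recognise the statement as the classical rectangular complementation symmetry, which I would prove using Poincaré duality in the cohomology of the Grassmannian. First: the coefficient of $[\a]$ in $[\lm]$ is by definition $c(\l;\m,\a)$, and when this is non-zero one automatically has $\a\subseteq\l\subseteq(k^l)$ (a non-vanishing $c(\l;\m,\a)$ forces $\a\subseteq\l$). On the other side, the skew diagram $(k^l)/\a$ becomes, after a $180^\circ$ rotation, the straight partition $\a^{-1}=(k-\a_l,\ldots,k-\a_1)$, so by rotation together with translation symmetry $[(k^l)/\a]=[\a^{-1}]$; likewise $[\l^{-1}]=[(k-\l_l,\ldots,k-\l_1)]$. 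Hence the coefficient of $[\a^{-1}]$ in $[\m]\star_{(k^l)}[\l^{-1}]=\sum_{\n\subseteq(k^l)}c(\n;\m,\l^{-1})[\n]$ is $c(\a^{-1};\m,\l^{-1})$, and this vanishes exactly when $\a\not\subseteq(k^l)$, which is precisely when the left-hand side vanishes too. So the theorem is equivalent to the identity
\[c(\l;\m,\a)=c(\a^{-1};\m,\l^{-1})\qquad\text{for }\m,\l,\a\subseteq(k^l).\]

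To prove this identity I would work in $H^*(Gr(l,\mathbb{C}^{k+l}),\mathbb{Z})$, using the two facts recalled in Section~2: the structure constants of the Schubert basis $\{\sigma_\n:\n\subseteq(k^l)\}$ are the coefficients $c(\n;\m,\a)$, and under the intersection pairing $\sigma_\n$ is dual to its complement, $\int_{Gr}\sigma_\n\sigma_\rho=\delta_{\rho,\n^{-1}}$. Expanding $\sigma_\m\cdot\sigma_\a$ and pairing with $\sigma_{\l^{-1}}$ gives $c(\l;\m,\a)=\int_{Gr}\sigma_\m\sigma_\a\sigma_{\l^{-1}}$, an expression manifestly invariant under permuting the three factors. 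Re-associating the product and pairing $\sigma_\m\cdot\sigma_{\l^{-1}}$ with $\sigma_\a$ instead gives $\int_{Gr}\sigma_\m\sigma_{\l^{-1}}\sigma_\a=c(\a^{-1};\m,\l^{-1})$, and comparing the two evaluations of the same integer yields the identity. (Equivalently one could invoke the permutation symmetry of the hive or honeycomb models, or give a direct bijective proof: rotate the complement inside $(k^l)$ of a lattice LR-tableau of shape $\lm$ and content $\a$ and reverse the alphabet $i\mapsto l+1-i$ to land on a lattice LR-tableau of shape $\a^{-1}/\m$ and content $\l^{-1}$.)

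The only real difficulty is the bookkeeping with complements: one must check carefully that rotating $(k^l)/\a$ produces \emph{precisely} the partition $\a^{-1}$, so that no term of the Schubert product gets mislabelled; that $\a\subseteq(k^l)$ holds automatically on the support of $[\lm]$, so that the two sides of the theorem have the same vanishing locus; and — in the bijective approach — that the lattice-word condition survives the rotate-and-reverse-alphabet operation. The geometric route has the advantage of avoiding this last, most delicate point entirely, since it collapses everything into the single symmetric quantity $\int_{Gr}\sigma_\m\sigma_\a\sigma_{\l^{-1}}$.
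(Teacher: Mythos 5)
Your argument is correct, but note that within this paper there is no proof to compare against: Theorem~\ref{Sa:Mainskewschub} is quoted verbatim from \cite[Theorem 4.2]{Gut} and used as a black box, so you have in effect supplied a proof of the imported result rather than reproduced one from the text. Your reduction is sound: the coefficient of $[\alpha]$ in $[\lambda/\mu]$ is $c(\lambda;\mu,\alpha)$, the coefficient of $[\alpha^{-1}]$ in $[\mu]\star_{(k^l)}[\lambda^{-1}]$ is $c(\alpha^{-1};\mu,\lambda^{-1})$, and since a nonzero $c(\lambda;\mu,\alpha)$ forces $\alpha\subseteq\lambda\subseteq(k^l)$, both sides vanish outside the range where $\alpha^{-1}$ is defined (your phrase ``vanishes exactly when $\alpha\not\subseteq(k^l)$'' overstates this --- both sides can vanish for other reasons --- but the intended point, that the identity need only be checked for $\alpha\subseteq(k^l)$, is right). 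The resulting identity $c(\lambda;\mu,\alpha)=c(\alpha^{-1};\mu,\lambda^{-1})$ is the classical rectangular complementation symmetry, and your derivation via the triple integral $\int_{Gr}\sigma_\mu\sigma_\alpha\sigma_{\lambda^{-1}}$ is the standard and correct one; be aware, though, that the Poincar\'e duality statement $\int_{Gr}\sigma_\nu\sigma_\rho=\delta_{\rho,\nu^{-1}}$ is \emph{not} among the facts recalled in Section~2 of this paper (only the structure constants are), so you are importing it from general Schubert calculus --- which is legitimate, since it amounts to the easy observation $c((k^l);\nu,\rho)=\delta_{\rho,\nu^{-1}}$, itself provable directly from the LR rule. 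Your sketched alternative (rotate the tableau and reverse the alphabet) is closer in spirit to the combinatorial setting of \cite{Gut}, but as you say it requires a careful verification that the lattice property is preserved, which the cohomological route avoids.
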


We will use that for $k\geq\mu_1+\nu_1,l\geq l(\mu)+l(\nu)$ the Schubert product $[\mu]\star_{(k^l)}[\nu]$ is the ordinary product $[\mu]\otimes[\nu]$.

Let us associate a skew diagram $\mA=\left((k^l)/\mu\right)^\circ)/\nu$ to partitions $\mu,\nu$. Here for the  Schubert product $k,l$ are fixed by $[\mu]\star_{(k^l)}[\nu]$ and for the ordinary product chosen as $k=\mu_1+\nu_1, l=l(\mu)+l(\nu)$. To obtain $\mA$ we remove from the rectangle $(k^l)$ the partition $\nu$ as usual and the partition $\mu$ rotated by $180^\circ$ from the lower right corner.

We know by definition that if  the box $(i,j)$ is in the base partition $\mB([\mA])$ then it is also in every partition $\a$ with $[\a]\in[\mA]$. So if we remove $\a$ from $(k^l)$ to get $\a^{-1}$ this box will be removed every time and so by Theorem~\ref{Sa:Mainskewschub} there cannot be a partition $\b$ (which would be a rotated $\a^{-1}$) with $[\b]\in[\mu]\star_{(k^l)}[\nu]$  containing the box $(k-i,l-j)$.

On the other hand if the box $(i,j)$ is not in the base partition then there is a partition $\a$ with $[\a]\in[\mA]$ without this box. So if we now remove $\a$ from $(k^l)$ to get $\a^{-1}$ this box is in $\a^{-1}$ and so there is a partition $\b$ (the rotated $\a^{-1}$) with $[\b]\in[\mu]\star_{(k^l)}[\nu]$ which does contain the box $(k-i,l-j)$.

So we get the following theorem:

\begin{Sa}
 \label{Sa:Cprod}
  Let $\mu,\nu$ be partitions and for fixed integers $k,l$ define the skew diagram $\mA=\left((k^l)/\mu\right)^\circ)/\nu$.

  Then: $\mC([\mu]\star_{(k^l)}[\nu])=\left((k^l)/\mB([\mA])\right)^\circ$
\end{Sa}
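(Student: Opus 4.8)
The plan is to unwind \cite[Theorem 4.2]{Gut} (Theorem~\ref{Sa:Mainskewschub}) together with Theorem~\ref{Sa:main} to convert the problem of describing $\mC([\mu]\star_{(k^l)}[\nu])$ into the already-solved problem of describing $\mB([\mA])$. The key observation is the order-reversing bijection $\a \mapsto \a^{-1}=(k^l)/\a$ between partitions contained in $(k^l)$ and (rotations of) partitions contained in $(k^l)$: if $\a\subseteq\b\subseteq(k^l)$ then $\b^{-1}\subseteq\a^{-1}$. Under this complementation, the \emph{intersection} of a family of partitions inside $(k^l)$ maps to the \emph{union} of the complements (up to the $180^\circ$ rotation), since a box $(k-i,l-j)$ lies in $\a^{-1}$ precisely when the box $(i,j)$ does \emph{not} lie in $\a$.

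The main steps, in order, are as follows. First, I would make precise the set-up: for the Schubert product $[\mu]\star_{(k^l)}[\nu]$ the integers $k,l$ are exactly those in the subscript, and the skew diagram $\mA=\bigl(((k^l)/\mu)^\circ\bigr)/\nu$ makes sense because $\mu,\nu\subseteq(k^l)$ in the relevant range; for the ordinary product one takes $k=\mu_1+\nu_1$, $l=l(\mu)+l(\nu)$, and then $[\mu]\star_{(k^l)}[\nu]=[\mu]\otimes[\nu]$ as noted in the excerpt, so the statement covers both cases uniformly. Second, I would invoke Theorem~\ref{Sa:Mainskewschub}: the coefficient of $[\a]$ in $[\mA]$ — note $[\mA]=[\l/\m']$ for the appropriate $\l,\m'$ coming from the double complement — equals the coefficient of $[\a^{-1}]$ in $[\mu]\star_{(k^l)}[\nu]$ (modulo matching up the rotation, which is harmless since $[(\c)^\circ]=[\c]$ as characters and rotation of a straight partition inside $(k^l)$ is again, after translation, the complement shape). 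Third, I would argue the two inclusions. For $\mC\subseteq((k^l)/\mB([\mA]))^\circ$: if $[\b]\in[\mu]\star_{(k^l)}[\nu]$ then $\b$ (up to rotation) equals $\a^{-1}$ for some $\a$ with $[\a]\in[\mA]$, hence $\a\supseteq\mB([\mA])$, hence $\b=\a^{-1}\subseteq \mB([\mA])^{-1}=((k^l)/\mB([\mA]))^\circ$; so the union of all such $\b$ is contained in that shape. For the reverse inclusion, every box not in $\mB([\mA])$ is missing from some $\a$ with $[\a]\in[\mA]$ (by definition of the base partition as the intersection), so the corresponding box of $((k^l)/\mB([\mA]))^\circ$ lies in the corresponding $\b=\a^{-1}$ with $[\b]\in[\mu]\star_{(k^l)}[\nu]$; thus every box of $((k^l)/\mB([\mA]))^\circ$ is covered, so it is the union, i.e.\ $\mC$.

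The part requiring the most care is the bookkeeping of the two $180^\circ$ rotations and the verification that $((k^l)/\mB([\mA]))^\circ$ is genuinely a partition (equivalently, that $\mB([\mA])\subseteq(k^l)$, which holds because $(k^l)$ itself, filled columnwise with $1,2,\dots,l$, gives an LR-tableau showing $[(k^l)]\in[\mA]$ — wait, more carefully, one needs that every $\a$ with $[\a]\in[\mA]$ satisfies $\a\subseteq(k^l)$, which is exactly the Schubert-product constraint $\nu\subseteq(k^l)$ in Theorem~\ref{Sa:Mainskewschub}). Once the rotation-complement dictionary is set up cleanly — "box $(i,j)\in\a \iff$ box $(k-i,l-j)\notin\a^{-1}$ (after rotating)" — both inclusions are immediate from Theorem~\ref{Sa:Mainskewschub} and Theorem~\ref{Sa:main}, and in fact the paragraph preceding the statement in the excerpt already carries out precisely this argument box-by-box; the proof is essentially a matter of packaging that discussion into the equality of the two partitions $\mC([\mu]\star_{(k^l)}[\nu])$ and $((k^l)/\mB([\mA]))^\circ$.
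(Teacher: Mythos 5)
Your proposal is correct and follows essentially the same route as the paper: the box-by-box complementation argument via Theorem~\ref{Sa:Mainskewschub} (a box lies in $\mB([\mA])$ iff it lies in every $\a$ with $[\a]\in[\mA]$ iff the rotated complementary box lies in no $\b$ appearing in the Schubert product) is exactly the paper's proof, which it carries out in the paragraph preceding the theorem. The extra care you take about the rotations and about $\a\subseteq(k^l)$ is fine and consistent with the paper's setup.
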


\begin{Bem}
 Since the Durfee size of the cover partition of the product is also the Durfee size of the product itself we can now easily read off the Durfee size of an arbitrary Schubert product. In~\cite{HL} we calculated the Durfee size only for some special kinds of Schubert products.
\end{Bem}

As noted above for the right choice of $k,l$ this gives us also the cover partition $\mC$ for the ordinary product.

We will give a short Example and calculate for $\m=(4,3,1)$ and $\nu=(5,2,2)$ the cover partition for the ordinary product $[\m]\otimes[\nu]$ and the Schubert product $[\mu]\star_{(7^4)}[\nu]$.

In the case of the ordinary product we first construct the skew diagram
\[\mA^1=(9^3,7^2,4)/(4,3,1)= \young(::::\h\h\h\h\h,:::\h\h\h\h\h\h,:\h\h\h\h\h\h\h\h,\h\h\h\h\h\h\h,\h\h\h\h\h\h\h,\h\h\h\h)\]
Using Remark~\ref{Bem:kurz} we get the base partition $\mB([\mA^1])=(8,7,6,4,3)$ and so for the cover partition
\[\mC([\m]\otimes[\nu])=((9^6)/\mB([\mA^1]))^\circ= \young(\h\h\h\h\h\h\h\h\h,\h\h\h\h\h\h\mB\mB\mB,\h\h\h\h\h\mB\mB\mB\mB,\h\h\h\mB\mB\mB\mB\mB\mB,\h\h\mB\mB\mB\mB\mB\mB\mB,\h\mB\mB\mB\mB\mB\mB\mB\mB)=(9,6,5,3,2,1)\]
where the boxes labeled $\mB$ form the partition $\mB([\mA^1])$.

Since we have $\mB([\m]\otimes[\nu])=(5,3,2)$ we now have that if $[\a]\in[\m]\otimes[\nu]$ then $\a$ has to satisfy $(5,3,2)\subseteq \a \subseteq (9,6,5,3,2,1)$. These upper and lower bounds are strict in the sense that there are no better bounds.

In the case of the Schubert product the skew diagram is:
\[\mA^2=(7,5,5,2)/(4,3,1)= \young(::::\h\h\h,:::\h\h,:\h\h\h\h,\h\h)\]
Using Remark~\ref{Bem:kurz} we get the base partition $\mB([\mA^2])=(4,2,1)$ and so for the cover partition
\[\mC([\m]\star_{(7^4)}[\nu])=((7^4)/\mB([\mA^2]))^\circ= \young(\h\h\h\h\h\h\h,\h\h\h\h\h\h\mB,\h\h\h\h\h\mB\mB,\h\h\h\mB\mB\mB\mB)=(7,6,5,3)\]
where the boxes labeled $\mB$ form the partition $\mB([\mA^2])$. From this we can also read off the Durfee size of the Schubert product as $d([\m]\star_{(7^4)}[\nu])=d(\mC([\mA^2]))=3$.

Since we have $(5,3,2)=\mB([\mu]\otimes[\nu])\subseteq \mB([\m]\star_{(7^4)}[\nu])$ we now have that if $[\a]\in[\m]\star_{(7^4)}[\nu]$ then $\a$ has to satisfy $(5,3,2) \subseteq \a \subseteq (7,6,5,3)$. Here the lower bound is not strict and explicit calculations show $\mB([\m]\star_{(7^4)}[\nu])=(5,4,2)$.

In the same way we can also construct the cover partition $\mC$ of skew characters if we restrict the skew diagram in the way that the associated Schubert product is in fact an ordinary product. If the skew diagram does not satisfy the constraints of the following theorem then we would only get a trivial upper bound for the cover partition.

\begin{Sa}
 \label{Sa:Cskew}
  Let $\mA=\l/\mu$ be a skew diagram with $\l=(\l_1^n,\l_{n+1},\ldots,\l_l), \mu_1\leq\l_l, l(\mu)\leq n $ and set $\l^{-1}=(\l_1^l)/\l$

 Then: $\mC([\mA])=\left(((\l_1)^l)/\mB([\mu]\otimes[\l^{-1}])\right)^\circ$
\end{Sa}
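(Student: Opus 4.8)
The plan is to reduce the statement to Theorem~\ref{Sa:Mainskewschub} applied inside the rectangle $((\l_1)^l)$, and then to convert the description of $\mC$ as a \emph{union} into the description of $\mB$ as an \emph{intersection} via complementation in that rectangle, exactly as was done for Theorem~\ref{Sa:Cprod}.

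First I would check that the hypotheses let us apply Theorem~\ref{Sa:Mainskewschub} with $k=\l_1$. Since $\l$ has $l$ parts, each at most $\l_1$, we have $\m\subseteq\l\subseteq((\l_1)^l)$, so the theorem gives that $c(\l;\m,\a)$ equals the coefficient of $[((\l_1)^l)/\a]$ in $[\m]\star_{((\l_1)^l)}[\l^{-1}]$ for every $\a$. Next, viewing $\l^{-1}=((\l_1)^l)/\l$ as a partition after rotation, its parts are $\l_1-\l_l\geq\l_1-\l_{l-1}\geq\cdots\geq\l_1-\l_1=0$, and since $\l_1=\cdots=\l_n$ the last $n$ of these vanish, so $l(\l^{-1})\leq l-n$ and $(\l^{-1})_1=\l_1-\l_l$. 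The hypotheses $\m_1\leq\l_l$ and $l(\m)\leq n$ then give $\m_1+(\l^{-1})_1=\m_1+\l_1-\l_l\leq\l_1$ and $l(\m)+l(\l^{-1})\leq n+(l-n)=l$, which is precisely the situation in which the Schubert product $[\m]\star_{((\l_1)^l)}[\l^{-1}]$ equals the ordinary product $[\m]\otimes[\l^{-1}]$. Combined with rotation symmetry this yields: $[\a]\in[\mA]$ if and only if $[(((\l_1)^l)/\a)^\circ]\in[\m]\otimes[\l^{-1}]$, where $(((\l_1)^l)/\a)^\circ$ is a genuine partition.

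This sets up a bijection $\a\leftrightarrow\b:=(((\l_1)^l)/\a)^\circ$ between the partitions $\a$ with $[\a]\in[\mA]$ and the partitions $\b$ with $[\b]\in[\m]\otimes[\l^{-1}]$; here every such $\a$ satisfies $\a\subseteq\l\subseteq((\l_1)^l)$, and every such $\b$ satisfies $\b\subseteq((\l_1)^l)$ (directly from $l(\b)\leq l(\m)+l(\l^{-1})\leq l$ and $\b_1\leq\m_1+(\l^{-1})_1\leq\l_1$), so the map is an inclusion-reversing involution on partitions inside that rectangle: a box $(i,j)$ of the rectangle lies in $\a$ iff the box $(l+1-i,\l_1+1-j)$ does not lie in $\b$. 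Then I would read off both sides box by box: a box $(i,j)$ lies in $\mC([\mA])$ iff some $\a$ with $[\a]\in[\mA]$ contains it, iff some $\b$ with $[\b]\in[\m]\otimes[\l^{-1}]$ misses the box $(l+1-i,\l_1+1-j)$, iff $(l+1-i,\l_1+1-j)\notin\mB([\m]\otimes[\l^{-1}])$ (which is exactly what it means for a box not to lie in the intersection $\mB$), iff $(i,j)$ is a box of $\bigl(((\l_1)^l)/\mB([\m]\otimes[\l^{-1}])\bigr)^\circ$. This gives the claimed equality.

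I do not expect a serious obstacle here: the whole content sits in the second paragraph, namely recognizing that the two numerical hypotheses on $\m$ and $\l$ are precisely the two inequalities collapsing the Schubert product $[\m]\star_{((\l_1)^l)}[\l^{-1}]$ to an ordinary product, so that Theorem~\ref{Sa:Mainskewschub} becomes usable; after that everything is the same ``complement inside a rectangle'' bookkeeping already used for Theorem~\ref{Sa:Cprod}. The only point requiring a little care is confirming that all partitions in play really fit inside $((\l_1)^l)$, which is what makes ``complement and rotate'' a well-defined involution and legitimizes the box-by-box argument.
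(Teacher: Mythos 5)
Your proof is correct and follows essentially the same route the paper intends: apply Theorem~\ref{Sa:Mainskewschub} with $k=\l_1$ in the rectangle $((\l_1)^l)$, observe that the hypotheses $\mu_1\leq\l_l$ and $l(\mu)\leq n$ are exactly what make the Schubert product $[\mu]\star_{((\l_1)^l)}[\l^{-1}]$ collapse to the ordinary product $[\mu]\otimes[\l^{-1}]$, and then run the same complement-and-rotate, box-by-box argument already used for Theorem~\ref{Sa:Cprod}. The paper only sketches this step (``in the same way\dots''), so your write-up is simply a more explicit version of the same argument, with the useful extra care of checking that all partitions involved fit inside $((\l_1)^l)$.
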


{\bfseries Acknowledgement:} John Stembridge's "SF-package for maple" \cite{stemmaple} was very helpful for computing examples.

\end{document}